\documentclass[12pt,a4paper, reqno]{amsart}
\usepackage{amsfonts,amsmath,amsthm,amssymb, amscd}
\usepackage[top=2cm, bottom=5cm, left=3cm, right=1cm]{geometry}

\usepackage{color}

\newcommand{\tifrac}{\text{\raisebox{-4,5pt}[0pt][0pt]{$\widetilde{\phantom{nn}}\kern-1.1em$}\,} \genfrac{}{}{0pt}{}}
\newcommand{\tifracc}{\text{\raisebox{-6,5pt}[0pt][0pt]{$\widetilde{\phantom{nn}}\kern-1.1em$}\,} \genfrac{}{}{0pt}{}}
\newcommand{\tifraccc}{\text{\raisebox{-6,5pt}[0pt][0pt]{$\widetilde{\phantom{nnn}}\kern-1.5em$}\,} \genfrac{}{}{0pt}{}}
\newcommand{\tifracccc}{\text{\raisebox{-7pt}[0pt][0pt]{$\widetilde{\phantom{mmn}}\kern-2.1em$}\,} \genfrac{}{}{0pt}{}}

\newtheorem{theorem}{Theorem}
\newtheorem{lemma}{Lemma}
\newtheorem{corollary}{Corollary}

\def\gp#1{\langle #1 \rangle}

\title[Elements of the Zelisko group II]{Generating solutions of a linear equation   and
structure of elements of the Zelisko group II}

\keywords{Linear equation, Commutative B\'ezout domain,   Stable range, Zelisko group}
\subjclass{15A06, 15A21, 13A05}

\author[Bovdi and Shchedryk]{V.A.~Bovdi and V.P.~Shchedryk}
\address{United Arab Emirates University, Al Ain, UAE}
\email{vbovdi@gmail.com}
\address{Pidstryhach Institute for Applied Problems of Mechanics
and Mathematics,  National Academy of Sciences of Ukraine, Lviv,  Ukraine}

\email{shchedrykv@ukr.net}

\begin{document}

\maketitle

\begin{abstract}
We  continue our  previous investigation of the Zelisko group of a matrix over B\'ezout domains.
The explicit form of elements of this group over homomorphic image of B\'ezout domain of stable rank 1.5 is described.
\end{abstract}

\section{Introduction }

Let $R$ be a commutative B\'ezout domain (finitely generated principal ideal domain) with $1\not=0$  and let $R^{n\times n}$ be the ring  of ${n\times n}$ matrices over $R$ in which $n\geq 2$. Let $U(R)$ and ${\rm GL}_n(R)$ be  groups of units of  rings $R$ and $R^{n\times n}$, respectively.
The notation $a| b$ in $R$ means that  $b=ac$ for some $c\in R$. The greatest common divisor of $a,b\in R$ is denoted  by $(a,b)$.

To  a diagonal matrix
$\Phi:={\rm diag}(\varphi_{1}, \ldots, \varphi_{k}, 0, \ldots, 0)\in R^{n\times n}$,
where $\varphi_{k} \ne 0$, $k \leq n$, and  $\varphi_{i}$ is a divisor of $\varphi_{i+1}$ for  $i=1, \ldots, k-1$,  we  associate   the  following subgroup (see \cite[p.\,61]{Shch_Mon} and \cite{Kaz2, Zel})
\[
{\bf{G}}_{\Phi } =\{ H\in{\rm GL}_{n} (R)\mid \; \exists S\in{\rm GL}_{n} (R), \;\text{s.t.}\;  H\Phi =\Phi S\; \} \leq{\rm GL}_{n} (R),
\]
 which is called the {\it Zelisko group} of the matrix $\Phi$.
The concept of the Zelisko group as well as its properties,   were  used by Kazimirski\u{\i} \cite{Kaz2} for the  solution of the problem of extraction of a regular divisor  of a  matrix over the  polynomial ring $F[x]$, where  $F$ is an algebraically closed  field of characteristic $0$. The properties of  the group ${\bf{G}}_{\Phi }$ in which  $\Phi\in R^{n\times n}$,   were explicitly investigated in \cite[Chapter 2.3 and  Chapter 2.7]{Shch_Mon} and \cite{Bovdi_Shchedryk_II, Zel}.

A ring $K$ has {\it  stable range} $1.5$ (see \cite[p.\,961]{Shch_StablRange} and  \cite[p.\,46]{Shchedryk_3})  if for each $a, b\in K$ and  $c \in K\setminus\{0\}$   with the property  $(a,b,c) = 1$ there exists $r \in K$ such that
$(a + br, c) = 1$.
This notion  arose as  a modification of the Bass's concept of the stable range of  rings (see \cite[p.\,498]{Bass}). Examples of rings of stable range $1.5$  are Euclidean rings,  principal ideal rings,  factorial rings, rings of algebraic integers,
rings of integer analytic functions, and  adequate  rings (see \cite{Bovdi_Shchedryk_I, Bovdi_Shchedryk_II} and \cite[p.\,21]{Shch_Mon}). Note that the commutative rings of stable range $1.5$ coincide with rings of almost stable range $1$ (see \cite{Anderson_Juett, McGovern}).
Finally, certain  properties of  the Zelisko group  ${\bf{G}}_{\Phi }$  are  closely related to a  factorizability of the general linear group over the ring $R$ of stable range $1.5$ (see \cite[Theorem 3, p.\,144]{Shch_DecompGroup}, \cite[Chapter 2.7]{Shch_Mon},  \cite[Theorem 1.2.2.,  p.\,12]{Chen},   \cite{Chen_2}, and \cite{Petechuk_Petechuk}).

Let $R$ be    a commutative B\'ezout domain   of stable range 1.5.
 For each $m\in R\setminus\{0, U(R)\}$ we define  the homomorphism $\overline{\bullet}: R\to R_m=R/mR$.

The explicit appearance of the elements of the group ${\bf{G}}_{\Phi}$ in a particular  case when $\Phi: ={\rm diag} (\varphi_{1}, \ldots, \varphi_{n}) \in R_m ^{n \times n}$ in which  $ \varphi_{n} \neq 0$ and   $\varphi_{i}$ is a divisor of $\varphi_{i+1}$ for  $i=1, \ldots, n-1$ was studied  in \cite{Bovdi_Shchedryk_II}. In the present  article, we continue  to investigate the form of elements of the Zelisko group  in general case, when  some diagonal elements of the matrix $\Phi$ can be zero.

First of all, we recall some definitions and facts from \cite{Bovdi_Shchedryk_II}.

The  solution  of a solvable linear equation $\overline{a}\cdot \overline{x}=\overline{b}$ in $R_m$  which divides all other solutions is called  {\it generating solution}  of this equation.

Each   solvable linear equation  $\overline{a}\cdot \overline{x}=\overline{b}$ in  $R_m$ has   at least one  generating solution by  \cite[Theorem 1(i)]{Bovdi_Shchedryk_II} and   each two  generating solutions of this  equation  are pairwise associates by  \cite[Theorem 1(ii)]{Bovdi_Shchedryk_II}.

 For each $c\in R$, let  $\overline{c}:=\overline{\bullet}(c)\in R_m$. Using \cite[Lemma 3]{Bovdi_Shchedryk_II}, we have
\begin{equation}\label{EQV:1}
\overline{c}=\overline{\mu}_{c}\;\overline{e},
\end{equation}
where  ${\mu_{c} }:=(c, m)$ is a preimage of $\overline{\mu}_{c}$ and $\overline{e} \in U(R_m)$.  However,  such representation of the element $\overline{c}$ is not unique (see Example 1 after \cite[Lemma 3]{Bovdi_Shchedryk_II}). The explicit form of a solution of the linear equation $\overline {a} \cdot \overline {x} = \overline {b}$ in $ R_m $ depends on the choice of the representation of the elements $\overline {a}, \overline {b} \in  R_m $ in the form \eqref{EQV:1} (see proof of \cite [Theorem 1(i)]{Bovdi_Shchedryk_II}). The next  example illustrates the relationship between the solutions of the linear equation  $\overline {a} \cdot \overline {x} = \overline {b}$ in $ R_m $ obtained with different representation of elements $\overline {a}, \overline {b}$ in the form \eqref{EQV:1}.

\smallskip
\noindent{\bf Example 1}.
Let  $R_{m} =\mathbb{Z}_{36}$. Consider  $\overline{33}\cdot\overline{x}=\overline{30}$.
Clearly
$\overline{33}=\overline{3}\cdot\overline{11}$ and $\overline{30}=\overline{6}\cdot\overline{5}$ in which
$(33, 36)=3$, $(30, 36)=6$, and $\overline{11}, \overline{5}\in U(\mathbb{Z}_{36})$.
A generating  solution of the equation $\overline{33} \cdot \overline{x} = \overline{30}$   is
\[
\textstyle
\overline{x}_0=\overline{\left(\frac{6}{3}\right)}\cdot \overline{5}\cdot (\overline{11})^{-1}=
\overline{2}\cdot \overline{5}\cdot \overline{23}=\overline{14},
\]
where
$\overline{23}=(\overline{11})^{-1}$ (see \cite [Theorem 1(i)]{Bovdi_Shchedryk_II}). Since $Ann (\overline{33}) = \gp{\overline{12}}$,  we conclude that $\overline{14}+Ann(\overline{33})=\{\overline{2}, \overline{14}, \overline{26}\}$ is the set of all solutions of
$\overline{33} \cdot \overline{x} = \overline{30}$.

Let's now choose another representation  of  $\overline{33}$ and $\overline{30}$ in the form (\ref{EQV:1}):
\[
\begin{split}
\overline{33}&=\overline{3}\cdot\overline{35},\; \text{where}\;(33, 36)=3,\; \text{and}\; \overline{35}\in U(\mathbb{Z}_{36}),\\\overline{30}&=\overline{6}\cdot\overline{35},\; \text{where}\;(30, 36)=6,\; \text{and}\; \overline{35}\in U(\mathbb{Z}_{36}).
\end{split}
\]
Clearly, the generating solution is $\overline{x}_0 = \overline{2}$.
Thus, we obtain  another generating solution, which by virtue of \cite [Theorem 1(ii)]{Bovdi_Shchedryk_II} are associated with each other.

\section{Preliminaries}

We start our proof with the following.

\begin{lemma}\label{L:1} Let  $\overline{c}_1, \overline{c}_2, \overline{c}_3$ be nonzero elements of  $R_m$ such that
$\overline{c}_1 \;| \; \overline{c}_2 \; |\;  \overline{c}_3$. Let
$\overline{\gamma}_{21}, \overline{\gamma}_{32}$ be arbitrary generating solutions of
$\overline{c}_1\cdot \overline{ x}=\overline{c}_2$  and
$\overline{c}_2\cdot \overline{ x}=\overline{c}_3$, respectively. Then
$\overline{\gamma}_{21}\cdot\overline{\gamma}_{32}$ is a generating solution
of $\overline{c}_1\cdot \overline{ x}=\overline{c}_3$.
\end{lemma}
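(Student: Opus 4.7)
The plan is to verify the two defining properties of a generating solution for $\overline{\gamma}_{21}\overline{\gamma}_{32}$: that it solves $\overline{c}_1\overline{x}=\overline{c}_3$, and that it divides every other solution. The first property is immediate from the direct computation $\overline{c}_1\cdot\overline{\gamma}_{21}\overline{\gamma}_{32}=(\overline{c}_1\overline{\gamma}_{21})\overline{\gamma}_{32}=\overline{c}_2\overline{\gamma}_{32}=\overline{c}_3$, using that $\overline{\gamma}_{21}$ and $\overline{\gamma}_{32}$ solve their respective equations.

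For the second property, the plan is to take an arbitrary solution $\overline{y}$ of $\overline{c}_1\overline{x}=\overline{c}_3$ and to peel off $\overline{\gamma}_{21}$ and then $\overline{\gamma}_{32}$ in succession. Setting $\overline{t}:=\overline{y}-\overline{\gamma}_{21}\overline{\gamma}_{32}$, the two identities $\overline{c}_1\overline{y}=\overline{c}_3$ and $\overline{c}_1\overline{\gamma}_{21}\overline{\gamma}_{32}=\overline{c}_3$ subtract to give $\overline{c}_1\overline{t}=0$. The key observation is then that $\overline{\gamma}_{21}+\overline{t}$ satisfies $\overline{c}_1(\overline{\gamma}_{21}+\overline{t})=\overline{c}_2$, so it is a solution of $\overline{c}_1\overline{x}=\overline{c}_2$. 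By the generating property of $\overline{\gamma}_{21}$, one gets $\overline{\gamma}_{21}\mid\overline{\gamma}_{21}+\overline{t}$, and hence $\overline{\gamma}_{21}\mid\overline{t}$; write $\overline{t}=\overline{\gamma}_{21}\overline{s}$.

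Substituting yields $\overline{y}=\overline{\gamma}_{21}(\overline{\gamma}_{32}+\overline{s})$, while multiplying $\overline{t}=\overline{\gamma}_{21}\overline{s}$ by $\overline{c}_1$ produces $\overline{c}_2\overline{s}=\overline{c}_1\overline{t}=0$. Therefore $\overline{\gamma}_{32}+\overline{s}$ solves $\overline{c}_2\overline{x}=\overline{c}_3$, and the generating property of $\overline{\gamma}_{32}$ forces $\overline{\gamma}_{32}\mid\overline{\gamma}_{32}+\overline{s}$. Writing $\overline{\gamma}_{32}+\overline{s}=\overline{\gamma}_{32}\overline{q}$, we conclude $\overline{y}=\overline{\gamma}_{21}\overline{\gamma}_{32}\overline{q}$, which establishes divisibility.

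The step I expect to be the main obstacle is spotting the bridging trick: a naive approach would attempt to show directly that $\overline{\gamma}_{21}\overline{\gamma}_{32}$ divides every annihilator of $\overline{c}_1$, but this statement is not transparent in isolation. Recasting the difference as $\overline{\gamma}_{21}+\overline{t}$ so that it solves the intermediate equation $\overline{c}_1\overline{x}=\overline{c}_2$ is what makes the first generating hypothesis applicable; replaying the same move one level up, with $\overline{\gamma}_{32}$ and the equation $\overline{c}_2\overline{x}=\overline{c}_3$, then closes the argument.
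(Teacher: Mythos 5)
Your proof is correct, but it follows a genuinely different route from the paper's. The paper works with the explicit canonical form of generating solutions inherited from the predecessor article: writing $\overline{c}_i=\overline{\mu}_{c_i}\overline{e}_i$ with $\mu_{c_i}=(c_i,m)$ and $\overline{e}_i\in U(R_m)$, it exhibits the specific generating solutions $\overline{\psi}_{ij}=\overline{\left(\mu_{c_i}/\mu_{c_j}\right)}\,\overline{e}_i(\overline{e}_j)^{-1}$ and verifies by direct computation that $\overline{\psi}_{21}\overline{\psi}_{32}=\overline{\psi}_{31}$, relying (implicitly, via the fact that any two generating solutions are associates) to pass to arbitrary $\overline{\gamma}_{21},\overline{\gamma}_{32}$. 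Your argument instead uses only the definition of a generating solution: after checking that $\overline{\gamma}_{21}\overline{\gamma}_{32}$ solves the equation, you take an arbitrary solution $\overline{y}$, set $\overline{t}=\overline{y}-\overline{\gamma}_{21}\overline{\gamma}_{32}$ with $\overline{c}_1\overline{t}=0$, and twice exploit the trick of recognizing $\overline{\gamma}_{21}+\overline{t}$ and then $\overline{\gamma}_{32}+\overline{s}$ as solutions of the intermediate equations, extracting the factors $\overline{\gamma}_{21}$ and $\overline{\gamma}_{32}$ in turn. Every step checks out. What your approach buys is generality and self-containment: it is valid in any commutative ring, needs none of the structure theory of $R_m$, and handles arbitrary generating solutions directly rather than through the associate-uniqueness theorem. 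What the paper's computation buys is the explicit formula $\overline{\psi}_{31}$ for the product, which is what justifies the subsequent notation for the iterated quotients used throughout the main theorem; your proof establishes the lemma but does not produce that closed form.
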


\begin{proof}
If $\overline{c}_1 \;| \; \overline{c}_2 \; |\;  \overline{c}_3 $, then  there  exist  ${c}_1, \;{c}_2,\;{c}_3  \in R$ such that  ${c}_1 \;| \;{c}_2 \; |\;{c}_3 $  by \cite[Lemma 1]{Bovdi_Shchedryk_II}.
Moreover   $\overline{c}_i=\overline{\mu}_{c_i}\;\overline{e}_{i}$, where  ${\mu_{c_i} }:=(b_i, m)$ is a preimage of $\overline{\mu}_{c_i}$ and $\overline{e}_{i} \in U(R_m)$ for  $i=1,2,3$ by \cite[Lemma 3]{Bovdi_Shchedryk_II}.
Now by analogy to proof of  \cite[Theorem 1(i)]{Bovdi_Shchedryk_II}  we have
\[
\textstyle
\overline{\psi}_{ij}=\overline{\left(\frac{\mu_{c_i}}{\mu_{c_j} }\right)}\overline{e}_{i}
(\overline{e}_{j})^{-1},\qquad (1\leq j< i \leq 3)
\]
is a generating solution of the equation $\overline{c}_j\cdot \overline{ x}=\overline{c}_i$.
From the property  $\mu_{c_1}\;| \; \mu_{c_2}\;| \; \mu_{c_3}$, follows that
$
\frac{\mu_{c_2}}{\mu_{c_1} } \cdot \frac{\mu_{c_3}}{\mu_{c_2} }=\frac{\mu_{c_3}}{\mu_{c_1} }$ and
\[
\textstyle
\overline{\left(\frac{\mu_{c_2}}{\mu_{c_1} }\right)}\cdot
\overline{\left(\frac{\mu_{c_3}}{\mu_{c_2} }\right)}=
\overline{\left(\frac{\mu_{c_3}}{\mu_{c_1} }\right)}.
\]
Consequently, we have
\[
\begin{split}
\textstyle
\overline{\psi}_{21}\cdot \overline{\psi}_{32}&=
\textstyle\overline{\left(\frac{\mu_{c_2}}{\mu_{c_1} }\right)}\overline{e}_{2}
(\overline{e}_{1})^{-1}\cdot
\overline{\left(\frac{\mu_{c_3}}{\mu_{c_2} }\right)}\overline{e}_{3}
(\overline{e}_{2})^{-1}\\
&=\textstyle\overline{\left(\frac{\mu_{c_2}}{\mu_{c_1} }\right)}\cdot
\overline{\left(\frac{\mu_{c_3}}{\mu_{c_2} }\right)}
\overline{e}_{3}
(\overline{e}_{1})^{-1}
\overline{e}_{2}
(\overline{e}_{2})^{-1}\\
&=
\textstyle\overline{\left(\frac{\mu_{c_3}}{\mu_{c_1} }\right)}
\overline{e}_{3}
(\overline{e}_{1})^{-1}\\&=
\overline{\psi}_{31}.
\end{split}
\]
\end{proof}
\noindent
Let $\overline{\varphi}_1, \overline{\varphi}_2, \ldots,  \overline{\varphi}_t \in \{R_m \setminus \{0\}\}$, such that  $\overline{\varphi}_1 \;| \; \overline{\varphi}_2 \; |\; \cdots \;| \; \overline{\varphi}_t$.
Denote by $\overline{M}_{ij}$ the set of all solutions of the equation
$\overline{\varphi}_j \cdot \overline{x} = \overline{\varphi}_i$, where $i> j$. By  symbol
$\textstyle \displaystyle{\tifracc{{\overline{\varphi}}_i}{{\overline{\varphi}}_j}}$
we denoted the minimum generating solution  from  $\overline{M}_{ij}$ with respect to some selected relation of order $\leq$ (see  \cite{Bovdi_Shchedryk_II}). Such choice is not good enough. Therefore, we propose the following improvement of this notation.

In each set $\overline{M}_{i + 1, i}$ we fix a generating  solution  denoted
\[
\textstyle{\tifracccc{\overline{\varphi}_{i + 1}}{\overline{\varphi}_{i}}} \qquad    (i = 1, \ldots, n-1).
\]
Fix integers $p$ and $q$ such that $p>q+1\geq 2$.   The product
\begin{equation}\label{EQV:2}
\textstyle\tifracccc{\overline{\varphi}_{q+1} }{\overline{\varphi}_{q}}\cdot
\tifracccc{\overline{\varphi}_{q+2} }{\overline{\varphi}_{q+1}} \ldots \tifracccc{\overline{\varphi}_{p-1} }{\overline{\varphi}_{p-2}}  \cdot  \tifracccc{\overline{\varphi}_{p} }{\overline{\varphi}_{p-1}}
\end{equation}
of generating solutions of sets $\overline{M}_{q + 1, q}$, $\overline{M}_{q + 2, q+1}$, \ldots, $\overline{M}_{p-1, p-2}$, $\overline{M}_{p, p-1}$, respectively,  belongs to $\overline{M}_{p q}$ by Lemma \ref{L:1} and we denote it by ${\tifracc{{\overline{\varphi}}_p}{{\overline{\varphi}}_q}}$.

Let   $\overline{c}_1,\overline{c}_2 \in \{R_m \setminus \{0\}\}$. Then
$\overline{c}_i=\overline{\mu}_{{c}_i}\overline{e}_i$, where
${\mu}_{c_i}=({c}_i, m)$ and   $\overline{e}_i \in U(R_m)$ for   $i=1,2$ by \eqref{EQV:1}.
Since ${\rm Ann}(\overline{c}_i)=\overline{\alpha}_{c_i}R_{m}$, where
\begin{equation}\label{EQV:3}
\textstyle
\alpha_{c_i}:=\frac{m}{\mu_{c_i}}
\end{equation}
 and $\mu_{c_i}:=(b_i,m)$ by \cite[Lemma 5]{Bovdi_Shchedryk_II}, we set $\overline{\alpha}_i:=\overline{\alpha}_{c_i}\overline{e}_i$. Hence
\[
\overline{\alpha}_iR_{m}=(\overline{\alpha}_{c_i}\overline{e}_i)R_{m}=\overline{\alpha}_{c_i}
(\overline{e}_iR_{m})=\overline{\alpha}_{c_i}R_{m}={\rm Ann}(\overline{c}_i).
\]
Note that if $\overline{c}_1 \;| \; \overline{c}_2$, then  we define $\tifracc{\overline{{c}}_2}{{\overline{c}}_1}$ as above.

\begin{lemma}\label{L:2}
Let   $\overline{c}_1, \overline{c}_2 \in \{R_m \setminus \{0\}\}$ such that
$\overline{c}_1 \;| \; \overline{c}_2$. Then $\tifracc{\overline{{c}}_2}{{\overline{c}}_1}$ is a generating solution of the equation
\begin{equation}\label{EQV:4}
\overline{\alpha}_2\overline{x}=\overline{\alpha}_1,
\end{equation}
where each $\gp{\overline{\alpha}_{i}}_{R_{m}}={\rm Ann}(\overline{c}_i)$ and  $\alpha_i$ has form \eqref{EQV:3} for $i=1,2$.
\end{lemma}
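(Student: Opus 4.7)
Plan: The strategy is to re-apply the construction of \cite[Theorem~1(i)]{Bovdi_Shchedryk_II} (the same one reused in the proof of Lemma~\ref{L:1}) to the ``annihilator equation'' $\overline{\alpha}_2\overline{x}=\overline{\alpha}_1$, and then match the outcome against $\tifracc{\overline{c}_2}{\overline{c}_1}$ by a short comparison of the explicit formulas, with the algebraic punchline being the identity $\alpha_{c_1}/\alpha_{c_2}=\mu_{c_2}/\mu_{c_1}$.

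First I would set up solvability. From $\overline{c}_1\mid\overline{c}_2$ in $R_m$, together with \cite[Lemma~1]{Bovdi_Shchedryk_II}, there exist preimages with $c_1\mid c_2$ in $R$, hence $\mu_{c_1}\mid\mu_{c_2}$ and so $\alpha_{c_2}=m/\mu_{c_2}\mid m/\mu_{c_1}=\alpha_{c_1}$; in particular $\overline{\alpha}_1\in\overline{\alpha}_2 R_m$, so $\overline{\alpha}_2\overline{x}=\overline{\alpha}_1$ is solvable. Since $\alpha_{c_i}\mid m$, the decomposition $\overline{\alpha}_i=\overline{\alpha}_{c_i}\overline{e}_i$ is already in the canonical shape $\overline{\mu}\cdot\overline{e}$ with $\mu$-part $\alpha_{c_i}$ and unit part $\overline{e}_i$. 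Running the construction of \cite[Theorem~1(i)]{Bovdi_Shchedryk_II} on this new equation produces the explicit generating solution
\[
\overline{y}_0=\overline{\left(\tfrac{\alpha_{c_1}}{\alpha_{c_2}}\right)}\overline{e}_1\overline{e}_2^{-1}
=\overline{\left(\tfrac{\mu_{c_2}}{\mu_{c_1}}\right)}\overline{e}_1\overline{e}_2^{-1}.
\]

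Second, I would compare this with $\tifracc{\overline{c}_2}{\overline{c}_1}$. The formula recalled in the proof of Lemma~\ref{L:1} shows that a generating solution of $\overline{c}_1\overline{x}=\overline{c}_2$ takes the form $\overline{(\mu_{c_2}/\mu_{c_1})}\overline{e}_2\overline{e}_1^{-1}$, so $\overline{y}_0$ and $\tifracc{\overline{c}_2}{\overline{c}_1}$ agree up to the unit factor $\overline{e}_2^{\,2}\overline{e}_1^{-2}\in U(R_m)$. Appealing to \cite[Theorem~1(ii)]{Bovdi_Shchedryk_II}, which tells us that any two generating solutions of a solvable equation are pairwise associates, then identifies $\tifracc{\overline{c}_2}{\overline{c}_1}$ as a generating solution of $\overline{\alpha}_2\overline{x}=\overline{\alpha}_1$.

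I expect the main obstacle to be the last step: being a unit-associate of $\overline{y}_0$ guarantees that $\tifracc{\overline{c}_2}{\overline{c}_1}$ generates the same principal ideal, but one still has to verify that it genuinely \emph{solves} $\overline{\alpha}_2\overline{x}=\overline{\alpha}_1$ for the specific $\overline{\alpha}_i$ fixed in the statement, rather than only an equation $\overline{\alpha}_2\overline{x}=\overline{\alpha}_1'$ for some other generator $\overline{\alpha}_1'$ of $\mathrm{Ann}(\overline{c}_1)$. The cleanest safeguard is the annihilator computation
\[
\overline{c}_1\cdot\bigl(\overline{\alpha}_2\cdot\tifracc{\overline{c}_2}{\overline{c}_1}\bigr)=\overline{\alpha}_2\cdot\overline{c}_2=0,
\]
which confines $\overline{\alpha}_2\cdot\tifracc{\overline{c}_2}{\overline{c}_1}$ inside $\mathrm{Ann}(\overline{c}_1)=\overline{\alpha}_1 R_m$; one then pins down the unit factor by substituting the explicit form of $\tifracc{\overline{c}_2}{\overline{c}_1}$ and invoking the identity $\alpha_{c_1}/\alpha_{c_2}=\mu_{c_2}/\mu_{c_1}$ as in the proof of Lemma~\ref{L:1}.
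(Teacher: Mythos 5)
Your route is essentially the paper's: both arguments come down to the explicit formula from \cite[Theorem 1(i)]{Bovdi_Shchedryk_II} together with the identity $\alpha_{c_1}/\alpha_{c_2}=\mu_{c_2}/\mu_{c_1}$. The paper does your second step by hand instead of citing the theorem: it writes $\alpha_1=\alpha_2\sigma$ in $R$ with $\sigma=\frac{\mu_{c_2}}{\mu_{c_1}}e_1e_2^{-1}$, computes ${\rm Ann}(\overline{\alpha}_2)=\overline{\mu}_{c_2}R_m$ via \cite[Lemma 5]{Bovdi_Shchedryk_II}, and checks $\overline{\sigma}\mid\overline{\mu}_{c_2}$ to conclude that $\overline{\sigma}$ divides every solution; your $\overline{y}_0$ is exactly this $\overline{\sigma}$, so up to that point the two proofs coincide.

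The problem is your last step, and you have in fact put your finger on the one genuinely delicate point. Knowing that $\tifracc{\overline{{c}}_2}{{\overline{c}}_1}$ is an associate of the generating solution $\overline{y}_0$ does not make it a \emph{solution} of \eqref{EQV:4}, and the safeguard you propose does not close this: the containment $\overline{\alpha}_2\cdot\tifracc{\overline{{c}}_2}{{\overline{c}}_1}\in{\rm Ann}(\overline{c}_1)=\overline{\alpha}_1R_m$ only yields a multiple of $\overline{\alpha}_1$, and when you actually ``substitute the explicit form'' $\tifracc{\overline{{c}}_2}{{\overline{c}}_1}=\overline{(\mu_{c_2}/\mu_{c_1})}\,\overline{e}_2\overline{e}_1^{-1}$ you obtain
\[
\overline{\alpha}_2\cdot\tifracc{\overline{{c}}_2}{{\overline{c}}_1}
=\overline{\alpha}_{c_2}\overline{e}_2\cdot\overline{\left(\tfrac{\mu_{c_2}}{\mu_{c_1}}\right)}\overline{e}_2\overline{e}_1^{-1}
=\overline{\alpha}_{c_1}\overline{e}_2^{\,2}\overline{e}_1^{-1}
=\overline{\alpha}_1\cdot\overline{e}_2^{\,2}\overline{e}_1^{-2}.
\]
The identity $\alpha_{c_1}/\alpha_{c_2}=\mu_{c_2}/\mu_{c_1}$ is already used up in this computation and leaves the unit $\overline{e}_2^{\,2}\overline{e}_1^{-2}$ unresolved; to finish one would need the separate claim $\overline{\alpha}_{c_1}(\overline{e}_2^{\,2}-\overline{e}_1^{\,2})=0$, i.e.\ $e_1^2\equiv e_2^2 \pmod{\mu_{c_1}}$, which you neither state nor prove. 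To be fair, the paper's own proof has the mirror image of this issue: it proves the statement for $\overline{\sigma}=\overline{(\mu_{c_2}/\mu_{c_1})}\,\overline{e}_1\overline{e}_2^{-1}$ and then tacitly identifies $\overline{\sigma}$ with $\tifracc{\overline{{c}}_2}{{\overline{c}}_1}$, whose unit part is $\overline{e}_2\overline{e}_1^{-1}$ rather than $\overline{e}_1\overline{e}_2^{-1}$. So your instinct to flag this step was correct, but the argument you sketch does not discharge it; as written the proposal has a gap exactly where you predicted one.
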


\begin{proof}
Since  $\overline{c}_2=\overline{c}_1\overline{d}$, where  $\overline{d} \in R_m$,    there are exist   $c_1, c_2, d \in R$ such that  $c_2= c_1 d$
by  \cite[Lemma 1]{Bovdi_Shchedryk_II}. This yields that  ${\mu}_{c_1}:=({c}_1, m) | ({c}_2, m)={\mu}_{c_2}$ and
\[
\begin{split}
\alpha_1=\alpha_{c_1}e_1&=\textstyle\frac{m}{\mu_{c_1}}e_1=\frac{m}{\mu_{c_2}}\frac{\mu_{c_2}}{\mu_{c_1}}e_1=\\&
=\textstyle\alpha_{c_2}\frac{\mu_{c_2}}{\mu_{c_1}}e_1=(\alpha_{c_2}e_2)\frac{\mu_{c_2}}{\mu_{c_1}}e_1e_2^{-1}=
\alpha_2\sigma,
\end{split}
\]
{
in which we define
$\sigma:=\textstyle \frac{\mu_{c_2}}{\mu_{c_1}}e_1e_2^{-1}$.
}
Thus    $\overline{\alpha}_2 \;\overline{\sigma}=\overline{\alpha}_1$,
so   $\overline{\sigma}+Ann(\overline{\alpha}_2)$ is the set of all solutions of \eqref{EQV:4}.

Noting that the preimages   of the invertible elements of the ring $ R_m $ are those elements of $R$ that are relatively  prime with $ m $, we obtain that the preimage $\overline{\alpha}_2 $ in $ R $ is the element $\alpha_{2} \varepsilon_2 $ where $(\varepsilon_2, m) = 1$.
According to \cite[Lemma 5]{Bovdi_Shchedryk_II} $Ann(\overline{\alpha}_2)=\overline{a}_2R_m,$ where
\[
\textstyle
{a}_2=\frac{m}{(\alpha_{2}\varepsilon, m)}=\frac{m}{(\alpha_{2}, m)}=
\frac{m}{\left(\frac{m}{\mu_{c_2}}, m \right)}=\frac{m}{\frac{m}{\mu_{c_2}}}=\mu_{c_2}.
\]
Since $\sigma= \frac{\mu_{c_2}}{\mu_{c_1}}e_1e_2^{-1}$, we get
$\mu_{c_2}=\sigma \mu_{c_1}e_1^{-1}e_2$ and   $\overline{\sigma}|\overline{\mu}_{c_2}$. Therefore $\overline{\sigma}$ is the divisor of all elements
of $Ann(\overline{\alpha}_2) $, so it  is a divisor of all  solutions  of \eqref{EQV:4} and  $\overline{\sigma}$ is a generating solution.
\end{proof}

{
Using the notation of Lemma \ref{L:2} we put $\textstyle
\tifracc{\overline{\alpha}_1}{\overline{\alpha}_2}:=\tifracc{\overline{{c}}_2}{{\overline{c}}_1}$.

\begin{corollary}\label{Co:1}
Let   $\overline{c}_1, \overline{c}_2 \in \{R_m \setminus \{0\}\}$ such that
$\overline{c}_1 \;| \; \overline{c}_2$.
Then $\tifracc{\overline{{c}}_2}{{\overline{c}}_1}\cdot{{\overline{\alpha}}_2}=
{{\overline{\alpha}}_1}$.
\end{corollary}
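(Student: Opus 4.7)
The plan is to derive the corollary as an essentially immediate consequence of Lemma~\ref{L:2}. By that lemma, the element $\tifracc{\overline{c}_2}{\overline{c}_1}$ is a generating solution of the equation $\overline{\alpha}_2 \overline{x} = \overline{\alpha}_1$ in $R_m$. By definition, a generating solution is, first and foremost, a solution of the equation; hence substituting it back into the equation yields the required identity.

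More concretely, I would proceed as follows. First, invoke Lemma~\ref{L:2} directly, which provides that $\tifracc{\overline{c}_2}{\overline{c}_1}$ lies in the solution set of $\overline{\alpha}_2 \overline{x} = \overline{\alpha}_1$. Second, observe that since $R_m$ is commutative, the equality $\overline{\alpha}_2 \cdot \tifracc{\overline{c}_2}{\overline{c}_1} = \overline{\alpha}_1$ can be rewritten as $\tifracc{\overline{c}_2}{\overline{c}_1} \cdot \overline{\alpha}_2 = \overline{\alpha}_1$, which is exactly the claim of the corollary. Third, recall that the definitional identification $\tifracc{\overline{\alpha}_1}{\overline{\alpha}_2} := \tifracc{\overline{c}_2}{\overline{c}_1}$ introduced just after Lemma~\ref{L:2} is consistent: a generating solution of $\overline{\alpha}_2\overline{x}=\overline{\alpha}_1$ is exactly what the notation $\tifracc{\overline{\alpha}_1}{\overline{\alpha}_2}$ is meant to denote, so the corollary verifies that this notation is compatible with the already introduced convention.

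There is no real obstacle here: the content of the corollary is a direct reading of Lemma~\ref{L:2}, repackaged as an explicit computational identity. The only point to be careful about is ensuring that the well-definedness issues coming from the nonuniqueness of the representation $\overline{c}_i = \overline{\mu}_{c_i}\overline{e}_i$ in \eqref{EQV:1} do not propagate into an ambiguity in the statement; but this is already handled in Lemma~\ref{L:2}, whose conclusion pins down the solution set of \eqref{EQV:4} irrespective of the choice of $\overline{e}_i$, and any two generating solutions are associates by \cite[Theorem~1(ii)]{Bovdi_Shchedryk_II}, so the identity $\tifracc{\overline{c}_2}{\overline{c}_1} \cdot \overline{\alpha}_2 = \overline{\alpha}_1$ holds for the fixed representative picked in \eqref{EQV:2}.
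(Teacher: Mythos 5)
Your proposal is correct and follows essentially the same route as the paper: both deduce from Lemma~\ref{L:2} that $\tifracc{\overline{{c}}_2}{{\overline{c}}_1}=\tifracc{\overline{\alpha}_1}{{\overline{\alpha}}_2}$ is in particular a solution of $\overline{\alpha}_2\,\overline{x}=\overline{\alpha}_1$, and then substitute it back into the equation to obtain the identity.
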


}

\begin{proof} Clearly,
$ \tifracc{\overline{{c}}_2}{{\overline{c}}_1}\cdot\overline{\alpha}_2=
\tifracc{\overline{\alpha}_1}{{\overline{\alpha}}_2}\overline{\alpha}_2$.
The element $\tifracc{\overline{\alpha}_1}{{\overline{\alpha}}_2}$
is a solution of
$\overline{\alpha}_2 \cdot \overline{x}  =   \overline{\alpha}_1$, so
$\tifracc{\overline{\alpha}_1}{{\overline{\alpha}}_2}\cdot \overline{\alpha}_2=\overline{\alpha}_1$ and
 $\tifracc{\overline{{c}}_2}{{\overline{c}}_1}\cdot{{\overline{\alpha}}_2}=
{{\overline{\alpha}}_1}$.
\end{proof}

Note that  $ \tifracc{\overline{{c}}_2}{{\overline{c}}_1}$ and
$ \tifracc{\overline{\alpha}_1}{{\overline{\alpha}}_2}$
are  generating solutions  of $ \overline{{c}}_1 \cdot \overline{x} = \overline{c}_2 $ and
$ \overline{\alpha}_2 \cdot \overline{x} = \overline{\alpha}_1 $,  respectively.
It should be noted that  sets of solutions  of these equations, generally speaking, do not coincide. Moreover, if $\overline{\alpha}_1, \overline{\alpha}_2$ have no form \eqref{EQV:3}, then $ \tifracc{\overline{{c}}_2}{{\overline{c}}_1}$ may not be a solution  of the equation $\overline{\alpha}_2 \cdot \overline{x} = \overline{\alpha}_1$.

\noindent{\bf Example 2}.
Let  $R_{m} =\mathbb{Z}_{144}$. Consider the equation
$\overline{4} \,\overline{x} = \overline{8}$, in which  $\overline{{c}}_1=\overline{4}$ and $\overline{{c}}_2=\overline{8}$. Since
$Ann (\overline{4}) = \{\overline{0}, \overline{36}, \overline{72} \} = \overline{36} \mathbb{Z}_{144}$, set of
solutions of this equation is $\overline{2}+Ann (\overline{4})=\{\overline{2}, \overline{38}, \overline{74} \}$,
and  all of those solutions is  generating. Clearly,
\[
Ann(\overline{8})= \{\overline{0}, \overline{18}, \overline{36},
\overline{54}, \overline{72}, \overline{90}, \overline{108},  \overline{126}
\}=\overline{18} \mathbb{Z}_{144}=\overline{126} \mathbb{Z}_{144}.
\]
Set $\overline{\alpha}_1= \overline{36}$ and $\overline{\alpha}_2 =\overline{18}$.
All solutions of the equation $ \overline{18} \overline{x} = \overline{36}$ belong to
\[
S=\{\overline{2}, \overline{10}, \overline{18},
\overline{26}, \overline{34}, \overline{42}, \ldots,  \overline{74}, \ldots,   \overline{138}
\}.
\]
Hence, if
$\tifraccc{\overline{8}}{\overline{4}}\;:=\overline{38}$, then $\tifraccc{\overline{8}}{\overline{4}}\not\in S$.

On the other hand, if we put $\overline{\alpha}_1= \overline{36}$ and
$\overline{\alpha}_2 =\overline{126}$, then  $\overline{38}$ is a solution  of the equation
$\overline{126}\overline{x}=\overline{36}$.

\begin{lemma}\label{Lemma:3} Let $R$ be a commutative ring and let
\[
A=
\left[\begin{matrix}
1 & 1 & 1 &{\ldots } & 1 & 1 & 1 \\a_{21} & 1 & 1 &{\ldots } & 1 & 1 & 1 \\b_{31} & a_{32} & 1 &{\ldots } & 1 & 1 & 1 \\{\ldots } &{\ldots } &{\ldots } &{\ldots } &{\ldots } &{\ldots } &{\ldots } \\b_{n1} & b_{n2} &  b_{n3} &{\ldots } &  b_{n,n-2} & a_{n,n-1} & 1
\end{matrix}\right]\in R^{n\times n},
\]
in which\quad  $b_{ij}=a_{j+1,j} a_{j+2,j+1} \cdots a_{i-1,i-2}a_{i,i-1}$,
where $i>j+1$,  $i=3, \ldots, n$,  and   $j=1, \ldots, n-2$.

If ${\det}(A)=\sum_{\sigma \in S_n}\gamma_{\sigma}$ and ${\det}(A^T)=\sum_{\sigma \in S_n}\delta_{\sigma}$ are classical  decompositions  by definition of ${\det}(A)$ and ${\det}(A^T)$ {
into terms,} respectively, then
\[
\gamma_{\sigma}= \gamma_{{\sigma}^{-1}}= \delta_{{\sigma}}.
\]

\end{lemma}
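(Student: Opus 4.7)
The plan is to introduce the shorthand $c_k := a_{k+1,k}$ for $k=1,\ldots,n-1$, so that every entry of $A$ can be written uniformly as $a_{ij}=1$ when $j\geq i$ and $a_{ij}=c_j c_{j+1}\cdots c_{i-1}$ when $j<i$ (the subdiagonal case $i=j+1$ is just $a_{ij}=c_j$, and the case $i>j+1$ is the defining product $b_{ij}$). This is the only place where the structural hypothesis on $A$ is used.

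Next I would separate the two equalities. The identity $\gamma_{\sigma^{-1}}=\delta_{\sigma}$ is a general property of any matrix: writing $\delta_\sigma=\operatorname{sgn}(\sigma)\prod_i (A^T)_{i,\sigma(i)}=\operatorname{sgn}(\sigma)\prod_i a_{\sigma(i),i}$, reindex $j=\sigma(i)$ and use $\operatorname{sgn}(\sigma)=\operatorname{sgn}(\sigma^{-1})$. The real work is $\gamma_\sigma=\gamma_{\sigma^{-1}}$, which (again since the signs agree) reduces to
\[
\prod_{i=1}^n a_{i,\sigma(i)} \;=\; \prod_{i=1}^n a_{\sigma(i),i}.
\]
I would prove this by comparing, for each $k\in\{1,\ldots,n-1\}$, the multiplicity with which $c_k$ appears on both sides. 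By the unified formula, $c_k$ contributes to the factor $a_{i,\sigma(i)}$ on the left exactly when $\sigma(i)\leq k<i$, and contributes to the factor $a_{\sigma(i),i}$ on the right exactly when $i\leq k<\sigma(i)$. Hence the two multiplicities are
\[
\#\{i:\;i>k,\;\sigma(i)\leq k\}\qquad\text{and}\qquad \#\{i:\;i\leq k,\;\sigma(i)>k\},
\]
respectively. Setting $I_k=\{1,\ldots,k\}$, the first is $|\sigma^{-1}(I_k)\setminus I_k|$ and the second is $|I_k\setminus \sigma^{-1}(I_k)|$, and these are equal because $|\sigma^{-1}(I_k)|=|I_k|=k$. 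So every $c_k$ appears with the same exponent on both sides, and the products coincide.

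There is no serious obstacle; the argument is essentially a crossing-balance count for a bijection. The only point that needs care is to check that the unified formula $a_{ij}=c_j c_{j+1}\cdots c_{i-1}$ really does account for every factor of $c_k$ with the correct multiplicity and that fixed points $\sigma(i)=i$ contribute $1$ and are therefore correctly excluded from both counts, so that the bookkeeping of the exponents is tight.
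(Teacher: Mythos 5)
Your argument is correct, and in outline it follows the same strategy as the paper: both proofs observe that the sign is common to $\gamma_\sigma$, $\gamma_{\sigma^{-1}}$, $\delta_\sigma$, that $\delta_\sigma=\gamma_{\sigma^{-1}}$ is a generality about transposes, and that the real content is matching the product of subdiagonal entries selected by $\sigma$ against those selected by $\sigma^{-1}$, using the telescoping structure $b_{ij}=a_{j+1,j}\cdots a_{i,i-1}$. Where you differ is in how that matching is justified. The paper delegates it to \cite[Lemma 9]{Bovdi_Shchedryk_II}, an identity of products of \emph{index ratios} $\prod p_i/q_{p_i}=\prod \beta_{\alpha_i}/\alpha_i$ over the below- and above-diagonal columns of $\sigma$, and then asserts that ``direct calculation'' transfers this rational-number identity to the ring elements. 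You instead count, for each generator $c_k=a_{k+1,k}$, its exponent on the two sides, reducing the equality to $|\sigma^{-1}(I_k)\setminus I_k|=|I_k\setminus\sigma^{-1}(I_k)|$ for $I_k=\{1,\ldots,k\}$. Your version is self-contained and arguably tighter: since one cannot divide in a general commutative ring, the passage from the ratio identity to the ring identity really does require exactly the multiplicity bookkeeping you carry out, so your exponent count makes explicit the step the paper leaves implicit. The trade-off is that the paper's route reuses an already-established lemma and keeps this proof short, at the cost of an external dependency and an unspoken translation step.
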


\begin{proof}
To each $\sigma=\left(\begin{smallmatrix}{1} &{2} &{\ldots } &{n} \\{i_{1} } &{i_{2} } &{\ldots } &{i_{n} } \end{smallmatrix}\right)\in S_n$  we assign  the following two sets:
\[
\begin{split}
\mathfrak{I}_1(\sigma)&=\textstyle\{ \;
(p_{i}, q_{p_{i}})  \; \mid  \;  p_{i} >q_{p_{i}} \quad \text{and}   \; \binom{ p_{i}}{q_{p_{i}}}\} \; \text{is a column in }\; \sigma,\; i= 1, \ldots,  s \};\\\mathfrak{I}_2(\sigma)&=\textstyle\{ \;
(\alpha_{i}, \beta_{\alpha_{i}})  \;\mid \;   \alpha_{i} \leq \beta_{\alpha_{i}} \;\; \text{and}  \; \binom{ \alpha_{i}}{\beta_{\alpha_{i}}} \; \text{is a column in }\;  \sigma, \; i= 1, \ldots, t\},
\end{split}
\]
where $s+t=n$.  If $A:=[c_{ij}]$, then in the decomposition of $\det(A)$  we have
\[
\begin{split}
\gamma_{\sigma}&=(-1)^{sign(\sigma)}c_{p_{1}, q_{p_{1}}} \cdot c_{p_{2}, q_{p_{2}}} \cdots c_{p_{s}, q_{p_{s}}};\\
\gamma_{\sigma^{-1}}&=(-1)^{sign(\sigma^{-1})}c_{\alpha_{1}, \beta_{\alpha_{1}}} \cdot  c_{\alpha_{2}, \beta_{\alpha_{2}}} \cdots c_{\alpha_{t}, \beta_{\alpha_{t}}},
\end{split}
\]
in which  $c_{p_{i}, q_{p_{i}}}$, as well as $c_{\alpha_{i}, \beta_{\alpha_{i}}}$ is a  product of elements of the first subdiagonal  of  $A$.
According to  \cite[Lemma 9]{Bovdi_Shchedryk_II} we obtain that
\[
\textstyle
\prod_{(p_i,q_{p_{i}})\in \mathfrak{I}_1(\sigma)}
 \frac{p_{i} }{q_{p_{i}}}=
\prod_{(\alpha_i, \beta_{\alpha_i})\in \mathfrak{I}_2(\sigma)}
\frac{\beta_{\alpha_i} }{\alpha_{i}},
\]
and direct calculation gives    $\gamma_{\sigma}= \gamma_{{\sigma}^{-1}}$.

Finally,  every term $\gamma_{\sigma}$ appearing in ${\det}(A)$ corresponds to the term $\gamma_{{\sigma} ^{- 1}}$ in ${\det} (A ^{T})$, so  $\gamma_{{\sigma} ^{- 1}}=\delta_{{\sigma}}$, as above.
\end{proof}

\begin{lemma}\label{Le:4} Let $A,B\in R^{n\times n}$ such that
\[
A=
\left[\begin{matrix}
h_{11} & h_{12} & h_{13} &{\ldots } &  h_{1,n-2} & h_{1,n-1} & h_{1n} \\a_{21}h_{11} & h_{22} & h_{23} &{\ldots } &  h_{2,n-2} & h_{2,n-1} & h_{2n} \\b_{31}h_{31} &a_{32} h_{32} & h_{33} &{\ldots } &  h_{3,n-2} & h_{3,n-1} & h_{3n} \\{\ldots } &{\ldots } &{\ldots } &{\ldots } &{\ldots } &{\ldots } &{\ldots } \\b_{n1}h_{n1} &b_{n2}h_{n2} & b_{n3}h_{n3} &{\ldots } &  b_{n,n-2}h_{n,n-2} & a_{n,n-1}h_{n,n-1} & h_{nn}
\end{matrix}\right]
\]
and
\[
B=
\left[\begin{matrix}
h_{11} & a_{21}h_{12} & b_{31}h_{13} &{\ldots } &  b_{n-2,1}h_{1,n-2} & b_{n-1,1}h_{1,n-1} & b_{n1}h_{1n} \\h_{11} & h_{22} & a_{32}h_{23} &{\ldots } &  b_{n-2,2}h_{2,n-2} & b_{n-1,2}h_{2,n-1} & b_{n2}h_{2n} \\h_{31} &h_{32} & h_{33} &{\ldots } &  b_{n-2,3}h_{3,n-2} & b_{n-1,3}h_{3,n-1} & b_{n3}h_{3n} \\{\ldots } &{\ldots } &{\ldots } &{\ldots } &{\ldots } &{\ldots } &{\ldots } \\h_{n-1,1} &h_{n-1,2} & h_{n-1,3} &{\ldots } &  h_{n-1,n-2} & h_{n-1,n-1} & a_{n,n-1}h_{n-1,n}\\h_{n1} &h_{n2} & h_{n3} &{\ldots } &  h_{n,n-2} & h_{n,n-1} & h_{nn}
\end{matrix}\right],
\]
in which $b_{ij}=a_{j+1,j} a_{j+2,j+1} \cdots a_{i-1,i-2}a_{i,i-1}$,
where $i>j+1$,  $i=3, \ldots, n$,  and   $j=1, \ldots, n-2$.

Then    the corresponding terms of  $\det(A)$ and $\det(B)$ coincide, and $\det(A)=\det(B)$.
\end{lemma}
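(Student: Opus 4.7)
The plan is to compare $\det(A)$ and $\det(B)$ term-by-term in their Leibniz expansions over $S_n$, and to reduce the matching of corresponding terms to Lemma \ref{Lemma:3}.

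First I would observe that each entry of $A$ factors as a product of a coefficient factor (built from the $a_{i,i-1}$ and the $b_{ij}$) and an $h$-factor. Let $M\in R^{n\times n}$ denote the matrix obtained from $A$ by discarding the $h$-factors: a direct inspection shows that $M$ is precisely the matrix of Lemma \ref{Lemma:3}. A parallel inspection of $B$ shows that the coefficient matrix extracted in the same way is $M^{T}$, while the $h$-factor occupying position $(i,j)$ is identical in $A$ and in $B$ (including the off-pattern slot in the top-left corner). Hence for every $\sigma\in S_n$ the products $\prod_i A_{i,\sigma(i)}$ and $\prod_i B_{i,\sigma(i)}$ share the same $h$-part $\prod_i h_{i,\sigma(i)}$ and differ only in their coefficient parts.

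Second, I would apply Lemma \ref{Lemma:3} to identify the coefficient parts. The Leibniz expansion gives
\[
\det(A)=\sum_{\sigma\in S_n}(-1)^{\mathrm{sgn}(\sigma)}\Bigl(\prod_{i}M_{i,\sigma(i)}\Bigr)\Bigl(\prod_{i}h_{i,\sigma(i)}\Bigr),
\]
together with a parallel expansion for $\det(B)$ obtained by replacing $M$ with $M^{T}$. The change of variable $j=\sigma(i)$ rewrites the coefficient-product in $\det(B)$ as $\prod_{j}M_{j,\sigma^{-1}(j)}$, which in the notation of Lemma \ref{Lemma:3} is precisely the coefficient part of $\gamma_{\sigma^{-1}}$ (the signs match because $\mathrm{sgn}(\sigma)=\mathrm{sgn}(\sigma^{-1})$). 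Lemma \ref{Lemma:3} then gives $\gamma_{\sigma^{-1}}=\gamma_\sigma$, so the coefficient-product for $\sigma$ in $\det(B)$ equals the coefficient-product for $\sigma$ in $\det(A)$. Combined with the already matching signs and $h$-products, this shows that the summand indexed by $\sigma$ is the same on both sides, whence $\det(A)=\det(B)$.

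The main obstacle is essentially bookkeeping: one has to verify carefully that the coefficient matrices extracted from $A$ and from $B$ are genuine transposes of the matrix in Lemma \ref{Lemma:3}, and that the $h$-factor at every position coincides in $A$ and $B$ so that the $h$-products $\prod_i h_{i,\sigma(i)}$ really do agree term-by-term. Once these two checks are carried out, Lemma \ref{Lemma:3} supplies the algebraic identity that closes the argument.
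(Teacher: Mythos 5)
Your proposal is correct and follows essentially the same route as the paper's own proof: the paper likewise splits each entry into an $h$-factor (identical in $A$ and $B$ at each position) times a coefficient $\lambda_{p,q}$ forming the matrix of Lemma~\ref{Lemma:3} for $A$ and its transpose for $B$, and then invokes $\gamma_\sigma=\gamma_{\sigma^{-1}}$ to match the coefficient products term by term. No substantive difference.
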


\begin{proof} To each
$\sigma=\left(\begin{smallmatrix}{1} &{2} &{\ldots } &{n} \\{i_{1} } &{i_{2} } &{\ldots } &{i_{n} } \end{smallmatrix}\right)\in S_n$   we associate  the following terms
\[
\begin{split}
\gamma_{\sigma}&=(-1)^{sign(\sigma)}{\lambda}_{1,i_1} h_{1,i_1} {\lambda}_{2,i_2} h_{2,i_2}\cdots {\lambda}_{n,i_n} h_{n,i_n}\\&=(-1)^{sign(\sigma)} (h_{1,i_1}  h_{2,i_2}\cdots h_{n,i_n})
({\lambda}_{1,i_1} {\lambda}_{2,i_2} \cdots {\lambda}_{n,i_n});\\
\mu_{\sigma}&=(-1)^{sign(\sigma)}{\lambda}_{i_1,1} h_{1,i_1} {\lambda}_{i_2,2} h_{2,i_2}\cdots {\lambda}_{i_n,n} h_{n,i_n}\\&=(-1)^{sign(\sigma)} (h_{1,i_1}  h_{2,i_2}\cdots h_{n,i_n})
({\lambda}_{i_1,1} {\lambda}_{i_2,2} \cdots {\lambda}_{i_n,n}),
\end{split}
\]
from the decompositions of  $\det(A)$ and $\det(B)$, respectively,  in which
\[
\lambda_{p,i_p}= \begin{cases}
a_{p,i_p},  & \quad\text{ if } p-i_p=1;\\
b_{p,i_p}, & \quad\text{ if } p-i_p>1;\\
1,  &\quad\text{ if } p-i_p  <1.
\end{cases}
\]
It is easy to see that the product ${\lambda}_{1,i_1} {\lambda}_{2,i_2} \cdots {\lambda}_{n,i_n}$ which arises in the decomposition of  the term $\gamma_{\sigma}$ in   $\det(A)$,  corresponds to the following  permutation
$\sigma=\left(\begin{smallmatrix}{1} &{2} &{\ldots } &{n} \\{i_{1} } &{i_{2} } &{\ldots } &{i_{n} } \end{smallmatrix}\right)\in S_n$.
Similarly, the product ${\lambda}_{i_1,1} {\lambda}_{i_2,2} \cdots {\lambda}_{i_n,n}$ which arises in the decomposition of  the term $\mu_{\sigma}$ in   $\det(B)$,  corresponds to the following  permutation $\sigma^{-1}=\left(\begin{smallmatrix} {i_{1} } &{i_{2} } &{\ldots } &{i_{n} } \\{1} &{2} &{\ldots } &{n} \end{smallmatrix}\right)\in S_n$. By virtue of Lemma \ref{Lemma:3},
$$
{\lambda}_{1,i_1} {\lambda}_{2,i_2} \cdots {\lambda}_{n,i_n}=
{\lambda}_{i_1,1} {\lambda}_{i_2,2} \cdots {\lambda}_{i_n,n}.
$$
Consequently, $\gamma_{\sigma}=\mu_{\sigma}$ for all  $\sigma\in S_n$, so
$\det(A)=\det(B)$.
\end{proof}

\section{Main result and its proof}
To simplify notation, in what follows we omit the bar from above when referring to the elements of the ring $R_{m}$.

\begin{theorem}\label{Th:1}
Let $R$ be  a commutative B\'ezout domain (with the property $1\not=0$) of stable range $1.5$.
Let $U(R)$ be  the group of units of   $R$. For each $m\in R\setminus \{U(R), 0\}$ we denote  the factor ring $R_{m} =R/mR$.

Let ${\bf{G}}_{\Phi }\leq {\rm GL}_n(R_m)$ be the Zelisko group  of the following matrix:
\begin{equation}\label{EQV:5}
\Phi: ={\rm diag}(1,  \ldots, 1, \varphi_{t}, \varphi_{t+1},  \ldots, \varphi_{k}, 0,  \ldots, 0)\in R_{m}^{n\times n}
\end{equation}
in which \quad   $\varphi_{t} | \varphi_{t+1} |   \cdots |  \varphi_{k} \ne 0$, \quad $\varphi_{t} \notin U(R_m)$,\quad  $1\leq t$ \quad and\quad   $k \leq n$.

Then the group  ${\bf{G}}_{\Phi }$ consists of all  invertible  matrices $H$ of the form:
\begin{itemize}
\item[\rm{(i)}] $H=
\left(\begin{matrix}
H_{11} & H_{12} & H_{13}   \\H_{21} & H_{22} & H_{23}   \\{\bf 0}  & H_{32} & H_{33}
\end{matrix}
\right)$ for $1<t$ and $k<n$;

\item[\rm{(ii)}] $H=
\left(\begin{matrix}
 H_{22} & H_{23}   \\
 H_{32} & H_{33}
\end{matrix}
\right)$ for $1=t$ and $k< n$;

\item[\rm{(iii)}] $H=H_{22}$ for  $1=t$ and $k= n$;

\item[\rm{(iv)}] $H=
\left(\begin{matrix}
 H_{11} & H_{12}   \\
 H_{21} & H_{22}
\end{matrix}
\right)$ for $1<t$ and $k=n$;

\item[\rm{(v)}] $H=
\left(\begin{matrix}
 M_{11} & M_{12}   \\
 {\bf  0}   & M_{22}
\end{matrix}
\right)$, in which  $M_{11}\in {\rm GL}_s(R)$,  $M_{22}\in {\rm GL}_{n-s}(R)$,   $1\leq s< n$ and $($see \eqref{EQV:5}$)$
the matrix $\Phi$ has the following form:
\[
\Phi: ={\rm diag}(\underbrace{1,  \ldots, 1}_s, \underbrace{0,  \ldots, 0}_{n-s})\in R_{m}^{n\times n}.
\]
\end{itemize}
In all of the above cases {\rm (i)-(iv)}, we have
\begin{align}
H_{21}&=\left[
\begin{matrix}
\varphi_{t}{h_{t1} } & \varphi_{t}{h_{t2} } &{\cdots} &  \varphi_{t}{h_{t,k-1} } \\\varphi_{t+1}{h_{{t+1},1} } & \varphi_{t+1}{h_{t+1,2} } &{\cdots} &  \varphi_{t+1}{h_{t+1,k-1} }  \\{\cdots} &{\cdots} &{\cdots}  &{\cdots} \\\varphi_{k}{h_{k1} } & \varphi_{k}{h_{k2} } &{\cdots} &  \varphi_{k}{h_{k,k-1} } \end{matrix}  \right];\label{EQV:6}\\
H_{22}&=\left[
\begin{matrix}
{h_{tt} } &{h_{t,t+1} } &{\cdots} &{h_{t,\; k-1} } &{h_{tk} } \\\tifracccc{\varphi_{t+1} }{\varphi_{t}} h_{t+1,t} & h_{t+1,t+1} &{\ldots} &{h_{t+1,  k-1} } &{h_{t+1,k} } \\{\cdots} &{\cdots} &{\cdots} &{\cdots} &{\cdots} \\
\tifracc{\varphi_{k}}{\varphi_{t}} h_{kt}  & \tifracccc{\varphi_{k}}{\varphi_{t+1}} h_{k,t+1}  &{\ldots} &
\tifracccc{\varphi_{k} }{\varphi_{k-1}} h_{k, k-1}  &{h_{kk} } \end{matrix}
   \right];\label{EQV:7}\\
H_{32}&=
\left[\begin{matrix}
\alpha_{t}{h_{k+1,t} } & \alpha_{t+1}{h_{k+1,t+1} } &{\cdots} &  \alpha_{k}{h_{k+1,k} } \\\alpha_{t}{h_{{k+2},t} } & \alpha_{t+1}{h_{k+2,t+1} } &{\cdots} &  \alpha_{k}{h_{k+2,k} }  \\
{\vdots} &{\vdots} &{\vdots}  &{\vdots} \\\alpha_{t}{h_{nt} } & \alpha_{t+1}{h_{n,t+1} } &{\cdots} &  \alpha_{k}{h_{nk} }
 \end{matrix}
   \right],\label{EQV:8}
\end{align}
and $H_{11}, H_{12}, H_{13}, H_{23}, H_{33}$ are some matrices of corresponding sizes.
\end{theorem}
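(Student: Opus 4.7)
The plan is to expand the defining equation $H\Phi=\Phi S$ with $S\in\mathrm{GL}_n(R_m)$ entrywise. Adopting the convention $\varphi_i:=1$ for $i<t$ and $\varphi_i:=0$ for $i>k$, the equations read $(H)_{ij}\varphi_j=\varphi_i(S)_{ij}$ for every pair $(i,j)$. For necessity I would partition the index pairs into nine cells according to the blocks $[1,t{-}1]$, $[t,k]$, $[k{+}1,n]$ and inspect each. The zero block $H_{31}=\mathbf{0}$ arises from $\varphi_j=1$ and $\varphi_i=0$; form \eqref{EQV:6} of $H_{21}$ comes from $\varphi_j=1$ and $\varphi_i\notin U(R_m)$, since then $(H)_{ij}\in\varphi_i R_m$; form \eqref{EQV:8} of $H_{32}$ arises when $\varphi_i=0$ and $\varphi_j\notin U(R_m)$, because $(H)_{ij}\varphi_j=0$ forces $(H)_{ij}\in\mathrm{Ann}(\varphi_j)=\alpha_j R_m$; finally, the subdiagonal factor $\tifracc{\varphi_i}{\varphi_j}$ in \eqref{EQV:7} appears because $\varphi_j(H)_{ij}$ being a multiple of $\varphi_i$ forces $(H)_{ij}$ to be a multiple of the generating solution of $\varphi_j\cdot x=\varphi_i$, which by Lemma \ref{L:1} coincides with the telescoping product in \eqref{EQV:2}.

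For sufficiency, given invertible $H$ of the stated form with free parameters $h_{ij}$, I would construct a companion $S$ by placing the multipliers in the \emph{transposed} positions: $(S_{11})_{ij}=h_{ij}$, $(S_{12})_{ij}=\varphi_j h_{ij}$, $S_{13}=\mathbf{0}$, $(S_{21})_{ij}=h_{ij}$; in $S_{22}$ set $(S_{22})_{ij}=h_{ij}$ on and below the diagonal and $(S_{22})_{ij}=\tifracc{\varphi_j}{\varphi_i}\,h_{ij}$ above it; $(S_{23})_{ij}=\alpha_i h_{ij}$; $S_{31}=\mathbf{0}$; $(S_{32})_{ij}=h_{ij}$; and $(S_{33})_{ij}=h_{ij}$. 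Verification of $H\Phi=\Phi S$ block by block uses only $\tifracc{\varphi_j}{\varphi_i}\varphi_i=\varphi_j$, $\alpha_i\varphi_i=0$, and, for the $S_{23}$ block, $\tifracc{\varphi_k}{\varphi_j}\alpha_k=\alpha_j$, the iterated form of Corollary \ref{Co:1}.

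The crucial step is to confirm $S\in\mathrm{GL}_n(R_m)$. For this I would set $a_{l,l-1}:=1$ for $1<l<t$ and $k{+}1<l\le n$, $a_{t,t-1}:=\varphi_t$, $a_{l,l-1}:=\tifracccc{\varphi_l}{\varphi_{l-1}}$ for $t<l\le k$, and $a_{k+1,k}:=\alpha_k$. A direct computation then shows that the products $b_{ij}=a_{j+1,j}\cdots a_{i,i-1}$ produce exactly the four multiplier types of \eqref{EQV:6}--\eqref{EQV:8}: $\varphi_i$ in the $H_{21}$ region, $\tifracc{\varphi_i}{\varphi_j}$ in the lower triangle of $H_{22}$, $\alpha_j$ in the $H_{32}$ region (via iterated Corollary \ref{Co:1}), and $\varphi_k\alpha_k=0$ in the $H_{31}$ region. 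Hence $H$ matches matrix $A$ of Lemma \ref{Le:4}, while the mirrored computation of $\lambda_{j,i}=a_{i+1,i}\cdots a_{j,j-1}$ shows that the constructed $S$ matches matrix $B$ with the \emph{same} free parameters $h_{ij}$ and the \emph{same} sequence $\{a_{l,l-1}\}$. By Lemma \ref{Le:4} we conclude $\det(H)=\det(S)$, so $S$ is invertible. Case (v) is handled separately as a degenerate standalone: when $\Phi$ has no nontrivial $\varphi_i$, the equations $(H)_{ij}\varphi_j=\varphi_i(S)_{ij}$ immediately force $H$ into block upper triangular shape with the two diagonal blocks arbitrary, and invertibility of $H$ is equivalent to invertibility of both diagonal blocks.

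The main obstacle will be the bookkeeping needed to verify that all four distinct multiplier types ($\varphi_i$, $\tifracc{\varphi_i}{\varphi_j}$, $\alpha_j$, $0$) emerge uniformly from a single sequence $\{a_{l,l-1}\}$, so that Lemma \ref{Le:4} can be applied to the full $n\times n$ matrix in one stroke rather than block by block. This telescoping relies essentially on the iterated identity $\tifracc{\varphi_k}{\varphi_j}\alpha_k=\alpha_j$ derived from Corollary \ref{Co:1}, and on the cancellation $\varphi_k\alpha_k=0$ which automatically reproduces the zero block $H_{31}$.
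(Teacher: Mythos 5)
Your proposal follows essentially the same route as the paper: the same entrywise analysis of $\varphi_j p_{ij}=\varphi_i s_{ij}$ over the nine block cells for necessity, the same explicit companion $S$ with multipliers in transposed positions for sufficiency, and the same appeal to Lemma \ref{Le:4} (with the telescoping identities of \eqref{EQV:2} and Corollary \ref{Co:1}) to get $\det(H)=\det(S)$ and hence invertibility of $S$. Your single explicit sequence $a_{l,l-1}$ merely makes uniform what the paper spells out as four separate cases, so the argument is correct and matches the paper's proof.
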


\begin{proof} {\rm (i)}
Let  ${\Phi }$ be a  matrix of the form \eqref{EQV:5} in which $1< t$ and $k<n$. A matrix $H:=[ p_{ij}] \in {\bf{G}}_{\Phi }$ we represent  in the form of a block matrix, in which $H_{11}$ of size $t\times t$, $H_{22}$ of size $(k-t+1)\times (k-t+1)$ and $H_{33}$ of size $(n-k)\times (n-k)$, respectively. There exists $S=[ s_{ij}]\in{\rm GL}_n(R_{m})$ such that $H\Phi =\Phi S$
by definition of the Zelisko group. This means that
\begin{equation}\label{EQV:9}
\varphi_{j} p_{ij} =\varphi_{i} s_{ij}.
\end{equation}
If $j\geq i$, then we  put  $s_{ij}:= \tifracc{\varphi_{j} }{\varphi_{i}}p_{ij}$, such that \eqref{EQV:9} holds for arbitrary value of $p_{ij}$.
It follows that  no restrictions are imposed on the elements $p_{ij}$ from the blocks
$ H_{12}, H_{13}$ and  $H_{23}$, because they lie above the main diagonal of $H$.

Considering that the first $t-1$ diagonal elements of the matrix $\Phi$ are $1$ and the last $n -k$ of its diagonal elements are zeros (see \eqref{EQV:5}), we get that no restrictions are imposed on the elements $p_{ij}$ from the blocks
$ H_{11}$ and  $H_{33}$.

If  $j=1, \ldots,  t-1$ and   $i=t,  \ldots, k$ (see \eqref{EQV:5}), then  \eqref{EQV:9} rewrites as
$p_{ij} =\varphi_{i} s_{ij}$.  It follows that $H_{21}$  has the form (\ref{EQV:6}).

Consider the block  $H_{22}$. Taking into account our definition of the element $\tifracc{\varphi_{i}}{\varphi_{j}}$ (see after the proof of Lemma \ref{L:1}) and the proof of \cite[Theorem 2]{Bovdi_Shchedryk_II}, we obtain that $H_{22}$  has the form  (\ref{EQV:7}).

Consider blocks  $H_{31}$ and $H_{32}$. Using the fact that  the first $t-1$ elements of $\Phi$ are $1$ and the last $n-k$ elements of $\Phi$ are $0$, the equation \eqref{EQV:9} rewrites as  $\varphi_{j}p_{ij} = 0$.  This means that
$p_{ij} \in Ann (\varphi_{j}) =\alpha_{j}R_m$, so  $p_{ij} =\alpha_{j} h_{ij}$. It immediately follows
that    $H_{31}={\bf 0}$ and $H_{32}$   has  form \eqref{EQV:8}.

The proofs of  {\rm (ii)-(v)}  are  particular cases of the previous one.

\smallskip

$\Leftarrow$. {\rm (i)}  Consider a block  matrix $S=
\left[\begin{smallmatrix}
S_{11} & S_{12} &{\bf 0}    \\S_{21} & S_{22} & S_{23}   \\S_{31}  & S_{32} & S_{33}
\end{smallmatrix}\right],
$
in which each block $S_{ij}$ has the same size as the size of corresponding block $H_{ij}$ in $H$  and  set
\[
\begin{split}
S_{12}&:=\left[
\begin{matrix}
\varphi_{t}{h_{1t} } & \varphi_{t+1}{h_{1,t+1} } &{\cdots} &  \varphi_{k}{h_{1k} } \\\varphi_{t}h_{2t } & \varphi_{t+1}{h_{2,t+1} } &{\cdots} &  \varphi_{k}{h_{2k} }  \\{\cdots} &{\cdots} &{\cdots}  &{\cdots} \\\varphi_{t}{h_{t-1,t} } & \varphi_{t+1}{h_{t-1,t+1} } &{\cdots} &  \varphi_{k}{h_{t-1,k} }
 \end{matrix}
   \right],
\smallskip
   \\
   S_{22}&:=\left[
\begin{matrix}
{h_{tt} } & \tifracccc{\varphi_{t+1} }{\varphi_{t}}{h_{t,t+1} } &{\cdots} & \tifracccc{\varphi_{k-1} }{\varphi_{t}}{h_{t,\; k-1} } & \tifracc{\varphi_{k}}{\varphi_{t}}{h_{tk} } \\{\cdots} &{\cdots} &{\cdots} &{\cdots} &{\cdots} \\h_{k-1,t} & h_{k-1,t+1} &{\ldots} &{h_{k-1,  k-1} } & \tifracccc{\varphi_{k} }{\varphi_{k-1}}{h_{k-1,k} } \\h_{kt}  &  h_{k,t+1}  &{\ldots} &
h_{k, k-1}  &{h_{kk} } \end{matrix}
   \right],\\
S_{23}&:=\left[
\begin{matrix}
\alpha_{t}{h_{t,k+1} } & \alpha_{t}{h_{t,k+2} } &{\cdots} &  \alpha_{t}{h_{tn} } \\\alpha_{t+1}{h_{t+1,{k+1}} } & \alpha_{t+1}{h_{t+1,k+2} } &{\cdots} &  \alpha_{t+1}{h_{t+1,n} }  \\{\cdots} &{\cdots} &{\cdots}  &{\cdots} \\\alpha_{k}{h_{k,k+1} } & \alpha_{k}{h_{k,k+2} } &{\cdots} &  \alpha_{k}{h_{kn} }
 \end{matrix}
   \right].
\end{split}
\]
It is easy to check that $H\Phi=\Phi S$ for arbitrary $S_{11}, S_{21}, S_{31}, S_{32}$ and  $S_{33}$.

Let us    prove that $S$ is invertible. Consider the following four cases:

Case 1. Let  $p_{ij}\in H_{21}$. Using \eqref{EQV:2} we have
\[
\begin{split}
\textstyle
p_{ij}=\varphi_{i} h_{ij}&= \varphi_{t}\tifracc{\varphi_{i}}{\varphi_{t}} h_{ij}\\
&=
\varphi_{t } \;  \tifracccc{\varphi_{t+1}}{\varphi_{t}} \tifracccc{{\varphi}_{t+2} }{{\varphi}_{t+1}} \cdots
\tifracccc{{\varphi}_{i-1} }{{\varphi}_{i-2}}  \tifracccc{{\varphi}_{i} }{{\varphi}_{i-1}}h_{ij}.
\end{split}
\]

Case 2. Let  $ p_{ij}\in H_{22}$ for $i>j$. Using \eqref{EQV:2} we obtain that
\[
p_{ij}=\tifracc{\varphi_{i}}{\varphi_{j}}h_{ij}=\tifracccc{{{\varphi}}_{j+1}}{{{\varphi}}_j}\tifracccc{{\varphi}_{j+2} }{{\varphi}_{j+1}} \cdots
\tifracccc{{\varphi}_{i-1} }{{\varphi}_{i-2}}  \tifracccc{{\varphi}_{i} }{{\varphi}_{i-1}}h_{ij}.
\]

Case 3. Let  $p_{ij}\in H_{32}$. Using  Corollary \ref{Co:1} and \eqref{EQV:2} we get that
\[
\begin{split}
\textstyle
p_{ij}=\alpha_{j} h_{ij}& =\tifracc{\varphi_{k}}{\varphi_{j}}\alpha_kh_{ij}=\\
&=
\tifracccc{\varphi_{j+1}}{\varphi_{j}} \tifracccc{{\varphi}_{j+2} }{{\varphi}_{j+1}} \cdots
\tifracccc{{\varphi}_{k-1} }{{\varphi}_{k-2}}  \tifracccc{{\varphi}_{k} }{{\varphi}_{k-1}}
\alpha_{k}h_{ij}.
\end{split}
\]

Case 4. Finally, let $ p_{ij}\in H_{31}\equiv {\bf 0}$. It is easy to check that
\[
\begin{split}
\textstyle
p_{ij}=0={\varphi}_{k}\alpha_{k}&=\textstyle\varphi_{t } \tifracc{{\varphi}_{k} }{{\varphi}_{t}}\alpha_{k}\\
&=\varphi_{t } \;  \tifracccc{\varphi_{t+1}}{\varphi_{t}} \tifracccc{{\varphi}_{t+2} }{{\varphi}_{t+1}} \cdots
\tifracccc{{\varphi}_{k-1} }{{\varphi}_{k-2}}  \tifracccc{{\varphi}_{k} }{{\varphi}_{k-1}}\alpha_{k}.
\end{split}
\]
Consequently, all elements of $H$, which lie below the main diagonal of $H$,  satisfy  Lemma \ref{Le:4}, so
$S$  is invertible.

The proofs of  {\rm (ii)-(v)}  are  particular cases of the previous one.
\end{proof}

\end{document}